\newtheorem{theorem}{Theorem}
\newtheorem{lemma}{Lemma}
\newtheorem{corollary}{Corollary}
\theoremstyle{remark}
\newtheorem*{acknowledgements}{Acknowledgements}
\renewcommand{\d}{{\mathrm d}}
\begin{document}

\hypersetup{pdfauthor={Mathew Rogers},%
pdftitle={Ramanujan zeta}}

\title{Identities for the Ramanujan zeta function}

\author{Mathew Rogers}
\address{Department of Mathematics and Statistics, Universit\'e de Montr\'eal,
CP 6128 succ.\ Centre-ville, Montr\'eal Qu\'ebec H3C\,3J7, Canada}
\email{mathewrogers@gmail.com}

\date{March 28, 2013.}

\subjclass[2010]{Primary 33C20; Secondary 11F11, 11F03, 11Y60, 33C75, 33E05} \keywords{Ramanujan zeta function, $\tau$ Dirichlet series, $L$-values}

\begin{abstract}
We prove formulas for special values of the Ramanujan tau zeta function.  Our formulas show that $L(\Delta, k)$ is a
period in the sense of Kontsevich and Zagier when $k\ge12$.  As an illustration, we reduce $L(\Delta, k)$ to explicit integrals of hypergeometric and algebraic functions when $k\in\{12,13,14,15\}$.
\end{abstract}

\maketitle
\section{Introduction}

\subsection{Background and previous results}
\label{s-intro}
Ramanujan introduced his zeta function in 1916 \cite{Ra}. Following Ramanujan, let
\begin{equation*}
\Delta(z):=q\prod_{n=1}^{\infty}(1-q^n)^{24}=\sum_{n=1}^{\infty}\tau(n) q^n,
\end{equation*}
where $q=e^{2\pi i z}$.  Ramanujan observed that $\tau(n)$ is multiplicative,
and this lead him to study the Dirichlet series:
\begin{equation*}
L(\Delta,s):=\sum_{n=1}^{\infty}\frac{\tau(n)}{n^s}.
\end{equation*}
Mordell subsequently proved that $L(\Delta,s)$ has an Euler product, and satisfies the functional equation $(2\pi)^{s-12}\Gamma(12-s)L(\Delta,12-s)=(2\pi)^{-s}\Gamma(s)L(\Delta,s)$ \cite{M}, \cite[pg.~242]{ZA}.  The main goal of this paper is to prove formulas for $L(\Delta,k)$ when $k$ is a positive integer.

Kontsevich and Zagier defined a \emph{period} to be a number which can be expressed as a multiple integral of algebraic functions, over a domain described by algebraic equations \cite{KZ}.  The ring of periods contains both the algebraic numbers, and certain transcendental numbers like $\pi$ and $\log 2$.  It follows from the work of Beilinson \cite{Be}, and Deninger and Scholl \cite{DS}, that special values of $L$-functions attached to modular forms are also periods.  Paraphrasing \cite[p.~24]{KZ}, their results follow from deep cohomological manipulations, and a careful study of values of regulators.

Following Deligne, we say that $L(\Delta,k)$ is a critical $L$-value if $1\le k\le 11$.  Kontsevich and Zagier summarized the properties of critical $L$-values in \cite{KZ}.  It is relatively easy to relate these values to integrals of algebraic functions.  The standard Mellin transform gives
\begin{equation}\label{critical q-integral}
L(\Delta,k)=\frac{(2\pi)^{k}}{(k-1)!}\int_{0}^{\infty}u^{k-1} \Delta(i u) \d u
\end{equation}
whenever $k\ge 1$.  The usual method for obtaining an elementary integral, is to set
\begin{equation}\label{variable change}
u=\frac{F(1-\alpha)}{2 F(\alpha)},
\end{equation}
where $F(\alpha)$ is the classical hypergeometric series:
\begin{align}
F(\alpha)&:=\sum_{n=0}^{\infty}{2n\choose n}^2\left(\frac{\alpha}{16}\right)^n\label{2F1 def}\\
&=\frac{2}{\pi}\int_{0}^{1}\frac{\d u}{\sqrt{(1-u^2)\left(1-\alpha u^2\right)}}.\label{elliptical integral}
\end{align}
Notice that $2i u$ is the period ratio of the elliptic curve $y^2=(1-x^2)(1-\alpha x^2)$.  Now appeal to the classical formulas \cite[pg.~124]{Be3}, \cite[pg.~120]{Be3}:
\begin{align}
\Delta(i u)&=\frac{1}{16}\alpha(1-\alpha)^{4}\left[F(\alpha)\right]^{12},\\
\frac{\d u}{\d\alpha}&=\frac{-1}{2\pi\alpha(1-\alpha)\left[F(\alpha)\right]^2},\label{duda}
\end{align}
and notice that $\alpha\in(1,0)$ when $u\in(0,\infty)$.  Equation \eqref{critical q-integral} reduces to
\begin{equation}\label{Critical L-value formula}
L(\Delta,k)=\frac{\pi^{k-1}}{16(k-1)!}\int_{0}^{1}(1-\alpha)^3 \left[F(\alpha)\right]^{11-k}\left[F(1-\alpha)\right]^{k-1} \d\alpha.
\end{equation}
Substituting \eqref{elliptical integral} allows us to deduce that $\pi^{11-k}L(\Delta,k)$ is a period if $1\le k\le 11$.  Kontsevich and Zagier illustrated this point with an equivalent identity \cite[pg.~24]{KZ}.

\subsection{Main results}
    It is not at all obvious that $L(\Delta,k)$ is a period if $k\ge 12$.  The method from the previous section fails, because the integrand in equation \eqref{Critical L-value formula} becomes a ratio of algebraic and hypergeometric functions.  It seems to be very difficult to pass from equation \eqref{Critical L-value formula} to anything interesting.  In this paper we will use ideas from the philosophy established jointly with Zudilin \cite{Rg1}, \cite{Rg2}, \cite{ZP}, to reduce these $L$-values to integrals of algebraic and hypergeometric functions.   For example, we prove the following theorem:
\begin{theorem}\label{Theorem value at 12} The following identity is true:
\begin{equation}\label{intro formula}
\begin{split}
L(\Delta,12)=-\frac{128\pi^{11}}{8241\cdot11!} \int_{0}^{1}& \left[F(\alpha)F(1-\alpha)\right]^5\\
 &\times\left(\frac{2+251 \alpha+876 \alpha^2+251 \alpha^3+2\alpha^4}{1-\alpha}\right)~\log\alpha~\d\alpha,
\end{split}
\end{equation}
where $F(\alpha)$ is defined in \eqref{2F1 def}.
\end{theorem}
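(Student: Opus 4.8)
\emph{Strategy.}\ The natural place to begin is formula \eqref{Critical L-value formula} itself at $k=12$, which is legitimate because the integral in \eqref{critical q-integral} still converges there; it reads
\begin{equation*}
L(\Delta,12)=\frac{\pi^{11}}{16\cdot 11!}\int_{0}^{1}(1-\alpha)^{3}\,\frac{\left[F(1-\alpha)\right]^{11}}{F(\alpha)}\,\d\alpha .
\end{equation*}
This is correct but not manifestly a Kontsevich--Zagier period: the obstruction is the factor $F(\alpha)^{-1}$. One should expect the remedy to introduce a logarithm, because the functional equation for $L(\Delta,s)$ gives $L(\Delta,12)=\frac{(2\pi)^{12}}{11!}L'(\Delta,0)$ --- up to a constant a \emph{derivative} of the $L$-function. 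So the plan is to rewrite the integral so that $F(\alpha)^{-1}$ disappears and $\log\alpha$ appears in its place.

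The device for doing so is the second solution of the hypergeometric equation. Put $\mathcal{L}:=\alpha(1-\alpha)\frac{\d^{2}}{\d\alpha^{2}}+(1-2\alpha)\frac{\d}{\d\alpha}-\frac14$, so that $\mathcal{L}F(\alpha)=\mathcal{L}F(1-\alpha)=0$. The connection formula relating the expansions at $\alpha=0$ and $\alpha=1$ produces a series $G(\alpha)$, holomorphic at $\alpha=0$ with $G(0)=0$, such that
\begin{equation*}
\pi F(1-\alpha)=F(\alpha)\log\frac{16}{\alpha}-G(\alpha),\qquad \mathcal{L}G=F(\alpha)-2(1-\alpha)\,\frac{\d}{\d\alpha}F(\alpha).
\end{equation*}
Using this for one factor of $F(1-\alpha)$ already splits off a term $\propto(1-\alpha)^{3}F(1-\alpha)^{10}\log(16/\alpha)$ of the desired shape and leaves a remainder still carrying $F(\alpha)^{-1}$; that remainder is then killed by integration by parts, driven by $\mathcal{L}F(\alpha)=\mathcal{L}F(1-\alpha)=0$, the equation for $G$, and the Wronskian (Legendre) relation $F(\alpha)\,\frac{\d}{\d\alpha}F(1-\alpha)-F(1-\alpha)\,\frac{\d}{\d\alpha}F(\alpha)=-\frac{1}{\pi\alpha(1-\alpha)}$, which follows from \eqref{duda} and \eqref{variable change}. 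Concretely this is a creative-telescoping computation: one seeks a ``certificate'' --- a rational function of $\alpha$ times a bounded-degree polynomial in the six functions $F(\alpha),\ \frac{\d}{\d\alpha}F(\alpha),\ F(1-\alpha),\ \frac{\d}{\d\alpha}F(1-\alpha),\ G(\alpha),\ \log\alpha$ --- whose $\alpha$-derivative equals $(1-\alpha)^{3}F(1-\alpha)^{11}/F(\alpha)$ minus a rational multiple of $\frac{2+251\alpha+876\alpha^{2}+251\alpha^{3}+2\alpha^{4}}{1-\alpha}\left[F(\alpha)F(1-\alpha)\right]^{5}\log\alpha$, with the $G$-terms cancelling (or being reabsorbed through the connection formula). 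Because the certificate vanishes at $\alpha=0$ and $\alpha=1$ --- which must be verified using $F(1-\alpha)\sim\frac1\pi\log(16/\alpha)$ and $\log\alpha\to-\infty$ as $\alpha\to0^{+}$, and $F(\alpha)\to\infty$ logarithmically as $\alpha\to1^{-}$ --- integrating the resulting identity over $[0,1]$ yields \eqref{intro formula}, with the palindromic numerator and the constant $-128/(8241\cdot 11!)$ falling out of the linear algebra.

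The hard part is producing the certificate: one must fix the correct degree bounds in advance so that the problem collapses to a finite, over-determined but consistent, linear system, and this is where the real labour --- and, most likely, a computer-algebra search --- lies. Two structural facts keep the search finite. First, differentiation preserves the ``$F$-weight'' that counts each of $F(\alpha),F(1-\alpha),\frac{\d}{\d\alpha}F(\alpha),\frac{\d}{\d\alpha}F(1-\alpha)$ as $+1$ and $F(\alpha)^{-1}$ as $-1$, and both $F(1-\alpha)^{11}/F(\alpha)$ and $\left[F(\alpha)F(1-\alpha)\right]^{5}\log\alpha$ have weight $10$, so the certificate lives in weight $10$. Second, the $\alpha\leftrightarrow 1-\alpha$ symmetry of $\left[F(\alpha)F(1-\alpha)\right]^{5}$ lets one symmetrize the target, roughly halving the number of unknowns. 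The remaining points --- that \eqref{Critical L-value formula} is valid at $k=12$ and that all boundary contributions vanish --- are routine given the exponential decay of $\Delta(iu)$ as $u\to0^{+}$ and $u\to\infty$ together with $\Delta(i/u)=u^{12}\Delta(iu)$. The same scheme, run with $k\in\{13,14,15\}$ and products $\left[F(\alpha)F(1-\alpha)\right]^{m}$ of the appropriate weight, should deliver the other identities announced in the abstract.
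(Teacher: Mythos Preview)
Your approach is genuinely different from the paper's and, as written, is a strategy rather than a proof. The paper never touches \eqref{Critical L-value formula} at $k=12$. Instead it proves the Eisenstein decomposition
\[
\Delta(z)+24\Delta(2z)+2^{11}\Delta(4z)=\frac{8}{504^{2}}\bigl[E_{6}(2z)-64E_{6}(4z)\bigr]\bigl[E_{6}(z)-33E_{6}(2z)+32E_{6}(4z)\bigr],
\]
applies the involution $z\mapsto -1/(2z)$ to one factor, and Mellin-transforms. This produces a four-fold sum inside a $u$-integral; the substitution $u\mapsto nu/r$ regroups the sum into a product of two recognisable Lambert-type series, one of which is $\log\prod_{m\ \text{odd}}\frac{1+e^{-\pi m/(2u)}}{1-e^{-\pi m/(2u)}}=-\tfrac18\log\alpha$ and the other is $\sum n^{11}q^{n}/(1-q^{2n})$, a level-$2$ weight-$12$ Eisenstein series equal to $\tfrac{1}{32}\alpha(2+251\alpha+\dots)[F(\alpha)]^{12}$. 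The palindromic quartic and the constant $8241$ (the Euler factor $1+24\cdot2^{-12}+2^{-13}$ times $2^{13}$) drop out of this modular computation, not from linear algebra on hypergeometric certificates.

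The gap in your proposal is that the certificate is asserted, not exhibited. Creative telescoping over the differential field generated by $F(\alpha),F(1-\alpha)$ and their derivatives is a reasonable hope, but you have enlarged that field by $G(\alpha)$ and $\log\alpha$, and nothing you have written guarantees that the $G$-contributions cancel or that the resulting linear system is consistent. The weight and symmetry observations bound the search space; they do not prove the search terminates successfully. Until the certificate is actually produced and its boundary behaviour checked, this is a heuristic, not a proof. Your closing remark that the same scheme handles $k\in\{13,14,15\}$ is also off the mark: the paper obtains those values only as \emph{double} integrals (Theorem~\ref{main theorem} and Corollary~\ref{Values at k bigger than 12}) and states explicitly that $k=12$ appears to be the unique case admitting a one-dimensional reduction, so a single-variable certificate of your type is not expected there.
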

Formula \eqref{intro formula} shows that $L(\Delta,12)$ belongs to the ring of periods.  Most of our identities involve hypergeometric functions, but these always reduce to elliptic integrals by \eqref{elliptical integral}.  Equation \eqref{intro formula} becomes a massive twelve dimensional integral:
\begin{equation}\label{12d integral}
\begin{split}
L(\Delta,12)=\frac{512 \pi}{1284977925}\int_{0}^{1}& \left[\int_{0}^{1}\int_{0}^{1}\frac{\d u~\d z}{\sqrt{(1-u^2)(1-z^2)(1-\alpha u^2)(1-(1-\alpha)z^2)}}\right]^5\\
 &\times\left(\frac{2+251 \alpha+876 \alpha^2+251 \alpha^3+2\alpha^4}{1-\alpha}\right)~\int_{\alpha}^{1}\frac{1}{t}\d t~\d\alpha.
\end{split}
\end{equation}
Theorem \ref{main theorem} provides the key formulas we need to express $L(\Delta,k)$ in terms of two dimensional integrals of algebraic and hypergeometric functions for $k\ge 12$.  Corollary \ref{Values at k bigger than 12} highlights examples when $k\in\{13,14,15\}$.  The $k=12$ case is apparently the only instance where a reduction to a one-dimensional integral is possible, and we discuss this case separately in Section \ref{Sec:12 case}.  Finally, we note that equation \eqref{intro formula} is closely related to the \textit{moments of elliptic integrals} studied in \cite{BBGW}, \cite{W}, and \cite{Z}.

\section{A formula for $L(\Delta,12)$}\label{Sec:12 case}
  The main goal of this section is to prove a formula for $L(\Delta,12)$ using the method developed in \cite{Rg1} and \cite{Rg2}.  In Section \ref{higher cases} we study $L(\Delta,k)$ for arbitrary $k\ge 12$.

The crucial first step is to decompose $\Delta(z)$ into a linear combination of products of two Eisenstein series.  The usual Eisenstein series is defined by
\begin{align*}
E_{k}(z):=1+\frac{2}{\zeta(1-k)}\sum_{n=1}^{\infty}\frac{n^{k-1}e^{2\pi i n z}}{1-e^{2\pi i n z}}.
\end{align*}
The most famous decomposition of $\Delta(z)$ is due to Ramanujan:
\begin{equation}\label{Ramanujan's decomposition}
1728\Delta(z)=E_4(z)^3-E_{6}(z)^2,
\end{equation}
but this formula involves $E_4(z)^3$, and the method from \cite{Rg1} and \cite{Rg2} only applies to modular forms which decompose into products of two Eisenstein series. We avoid this obstruction by considering a linear combination of $\Delta$'s.
\begin{lemma}\label{decomp lemma}  We have
\begin{equation}\label{decomposition}
\begin{split}
\Delta(z)+24\Delta(2z)+2^{11}\Delta(4z)=\frac{8}{504^2}&\left[E_6(2z)-64E_6(4z)\right]\\
&\times\left[E_6(z)-33E_{6}(2z)+32E_6(4z)\right].
\end{split}
\end{equation}
\end{lemma}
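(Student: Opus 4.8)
The plan is to read both sides of \eqref{decomposition} as holomorphic modular forms of weight $12$ on $\Gamma_0(4)$, and then certify the identity by comparing finitely many Fourier coefficients. For the modularity: $E_6$ is a modular form of weight $6$ for $\mathrm{SL}_2(\mathbb{Z})$, and since $z\mapsto Nz$ sends weight-$k$ forms on $\mathrm{SL}_2(\mathbb{Z})$ to weight-$k$ forms on $\Gamma_0(N)$, each of $E_6(z)$, $E_6(2z)$, $E_6(4z)$ lies in $M_6(\Gamma_0(4))$. Hence the two bracketed factors on the right-hand side lie in $M_6(\Gamma_0(4))$, their product lies in $M_{12}(\Gamma_0(4))$, and since $\Delta(z)$, $\Delta(2z)$, $\Delta(4z)$ all lie in $S_{12}(\Gamma_0(4))$, the difference of the two sides of \eqref{decomposition} is an element of $M_{12}(\Gamma_0(4))$.

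Next I would reduce to a finite computation. It is classical that $\dim M_{12}(\Gamma_0(4))=7$; equivalently, by Sturm's theorem a form in $M_{12}(\Gamma_0(4))$ whose Fourier coefficients at $q^0,q^1,\dots,q^6$ all vanish is identically zero (here $\tfrac{12}{12}\,[\mathrm{SL}_2(\mathbb{Z}):\Gamma_0(4)]=6$). So it suffices to expand both sides as power series in $q=e^{2\pi i z}$ and verify that the coefficients of $q^0,q^1,\dots,q^6$ agree. Using $E_6(z)=1-504\sum_{n\ge1}\sigma_5(n)q^n$ and $\Delta(z)=\sum_{n\ge1}\tau(n)q^n$, both expansions are elementary: the constant terms vanish on both sides — on the right because $1-33+32=0$ — and matching the coefficient of $q$ pins down the constant $8/504^2$ (the left side contributes $\tau(1)=1$ while the right side contributes $\tfrac{8}{504^2}(-63)(-504)=1$). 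Checking $q^2,\dots,q^6$ then finishes the argument.

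The only mildly delicate point is the bookkeeping in the last step: the Eisenstein factors carry the comparatively large coefficient $504$, so one must compute $\sigma_5(n)$ for small $n$ and multiply out the two series with some care. Conceptually, though, there is no real obstacle — once both sides are known to lie in the seven-dimensional space $M_{12}(\Gamma_0(4))$, the lemma reduces to a short mechanical verification.
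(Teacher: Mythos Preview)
Your argument is correct and is essentially the paper's own proof: both sides are observed to lie in $M_{12}(\Gamma_0(4))$, and then the valence/Sturm bound $\tfrac{12}{12}[\mathrm{SL}_2(\mathbb{Z}):\Gamma_0(4)]=6$ reduces the identity to checking the coefficients of $q^0,\dots,q^6$. The paper simply states this and appeals to a computer check of the first $1000$ coefficients, whereas you carry out the $q^0$ and $q^1$ verifications by hand; otherwise the arguments coincide.
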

\begin{proof} It is easy to show that both sides of the equation are modular forms on $\Gamma_{0}(4)$.  By the standard valence formula for congruence subgroups \cite{Sc}, the two sides are equal if their Fourier series expansions agree to more than $[\Gamma(1):\Gamma_{0}(4)]=6$ terms.  We used a computer to check that the first $1000$ Fourier coefficients agree, and thus we conclude that the identity is true.  It is possible to construct an alternative elementary proof by applying formulas from Ramanujan's notebooks (use \cite[pg.~124, Entry~12]{Be3} and \cite[pg.~126, Entry~13]{Be3}).
\end{proof}
Now apply an involution to the first term on the right-hand side of \eqref{decomposition}.  The equation becomes
\begin{equation*}
\begin{split}
\Delta(z)+24\Delta(2z)+2^{11}\Delta(4z)=\frac{8}{504^2(2z)^6}&\left[E_6\left(\frac{-1}{2z}\right)-E_6\left(\frac{-1}{4z}\right)\right]\\
&\times\left[E_6(z)-33E_{6}(2z)+32E_6(4z)\right].
\end{split}.
\end{equation*}
Suppose that $z=i u$ with $u\ge 0$.  The right-hand side reduces to a four-dimensional infinite series.  We have
\begin{equation*}
\Delta(i u)+24~\Delta(2i u)+2^{11}~\Delta(4i u)=\frac{1}{8 u^6}\sum _{\substack{n,m,r,s\ge 1\\\text{$m$,$r$,$s$ odd}}} (n r)^5 e^{-2\pi\left(\frac{  n m}{4 u}+r s u\right)}.
\end{equation*}
Multiply both sides by $u^{k-1}$ and integrate for $u\in(0,\infty)$.  By uniform convergence:
\begin{equation}\label{almost done 1}
\begin{split}
8\left(1+24\cdot 2^{-k}+2^{11-2k}\right)L(\Delta,k)=\frac{(2\pi)^{k}}{(k-1)!}\sum_{\substack{n,m,r,s\ge 1\\\text{$m$,$r$,$s$ odd}}}(n r)^5 &\int_{0}^{\infty}u^{k-7}e^{-2\pi\left(\frac{n m}{4u}+r s u\right)}\d u.
\end{split}
\end{equation}
The rational term on the left cancels the Euler factor of $L(\Delta,s)$ at the prime $p=2$.  Notice that
\begin{equation*}
\left(1+24\cdot2^{-k}+2^{11-2k}\right)L(\Delta,k)=\sum_{\substack{n=1\\\text{$n$ odd}}}^{\infty}\frac{\tau(n)}{n^s}.
\end{equation*}
Finally apply the key trick: \textit{Use a change of variables to swap the indices of summation inside the integral in \eqref{almost done 1}.  If the trick is properly executed, then the right-hand side reduces to an integral involving modular functions.}

\begin{proof} [Proof of Theorem \ref{Theorem value at 12}] Set $k=12$ and let $u\mapsto n u/r$ in \eqref{almost done 1}.  The formula becomes
\begin{align*}
\frac{8241}{2^{22}\pi^{12}}L(\Delta,12)=&\frac{1}{11!}\int_{0}^{\infty}u^{5}\sum_{\substack{r,m\ge 1\\\text{$r$,$m$ odd}}}\frac{1}{r}e^{-\frac{2\pi r m}{4u}}\sum_{\substack{n,s\ge 1\\\text{$s$ odd}}}n^{11} e^{-2\pi n s u}  \d u\\
=&\frac{1}{11!}\int_{0}^{\infty}u^{5}\log\left(\prod_{\substack{m=1\\\text{$m$ odd}}}^{\infty}\frac{1+e^{-\frac{2\pi m}{4u}}}{1-e^{-\frac{2\pi m}{4u}}}\right)\sum_{n=1}^{\infty}\frac{n^{11} e^{-2\pi n u}}{1-e^{-4\pi n u}}  \d u.
\end{align*}
Now suppose that $u$ and $\alpha$ are related by \eqref{variable change}.  In particular:
\begin{equation*}
u=\frac{F(1-\alpha)}{2 F(\alpha)}.
\end{equation*}
Then $\alpha\in(1,0)$ when $u\in(0,\infty)$.  We compute $\d u/\d \alpha$ using \eqref{duda}, and various identities from Ramanujan's notebooks imply
\begin{align}
\prod_{\substack{m=1\\\text{$m$ odd}}}^{\infty}\frac{1+e^{-\frac{2\pi m}{4u}}}{1-e^{-\frac{2\pi m}{4u}}}&=\alpha^{-1/8},\label{L12 extra 1}\\
\sum_{n=1}^{\infty}\frac{n^{11} e^{-2\pi n u}}{1-e^{-4\pi n u}}=&\frac{1}{32}\alpha\left(2+251 \alpha+876 \alpha^2+251 \alpha^3+2 \alpha^4\right) \left[F(\alpha)\right]^{12}.\label{L12 extra 2}
\end{align}
We can prove \eqref{L12 extra 1} using \cite[pg.~124, Entry~12]{Be3}, and the proof of \eqref{L12 extra 2} follows from the Eisenstein series relation $E_{12}(z)=E_{6}(z)^2$ and \cite[pg.~126, Entry~13]{Be3}.
Combining the various formulas completes the proof.
\end{proof}

\section{Formulas for $L(\Delta,k)$ when $k\ge 12$}\label{higher cases}

In this section we obtain double integrals for $L(\Delta,k)$ when $k\ge 12$.  In general, it seems to be difficult to simplify formulas for $L(f,k)$, when $k>\text{weight}(f)$.  So far there is only one instance where such an $L$-value has been reduced to recognizable special functions.  Zudilin proved that
\begin{equation}\label{L(g,3)}
\begin{split}
\frac{768\sqrt{2}}{\pi^{3/2}}L(g,3)=&\Gamma^2\left(1/4\right){_4F_3}\left(\substack{1,1,1,\frac12\\\frac74,\frac32,\frac32};1\right)
+24\Gamma^2\left(3/4\right){_4F_3}\left(\substack{1,1,1,\frac12\\\frac54,\frac32,\frac32};1\right)\\
&+3\Gamma^2\left(1/4\right){_4F_3}\left(\substack{1,1,1,\frac12\\\frac34,\frac32,\frac32};1\right).
\end{split}
\end{equation}
where $g(z)=\eta^2(4z)\eta^2(8z)$ is the weight $2$ CM newform attached to conductor $32$ elliptic curves \cite{ZP}, \cite{Zt}.  Rodriguez-Villegas and Boyd used numerical experiments to find many relations between Mahler measures and values of $L(f,k)$, but all of their conjectures are still open \cite{Bo}, \cite{RVTV}.

\begin{theorem}\label{main theorem}Assume that $k\ge 12$.  If $k$ is odd:
\begin{equation}\label{odd case}
\begin{split}
8&\left(1+24\cdot 2^{-k}+2^{11-2k}\right)L(\Delta,k)\\
&\qquad=\frac{2^{k-7} i^{k-1}\pi^{2k-11}\left[\zeta(6-k)\right]^2}{(k-1)!(k-12)!}\int_{0}^{\infty}\int_{u}^{\infty}(z-u)^{k-12} P_{k-5}(u)Q_{k-5}(z)\d z~\d u,
\end{split}
\end{equation}
where
\begin{align}
P_k(u):=&E_{k}(i u)-(2+2^{k})E_{k}(2i u)+2^{k+1}E_{k}(4 i u),\label{P def}\\
Q_k(z):=&E_{k}(i z)-E_{k}(2 i z)\label{Q def}.
\end{align}
If $k$ is even:
\begin{equation}\label{even case}
\begin{split}
8&\left(1+24\cdot 2^{-k}+2^{11-2k}\right)L(\Delta,k)\\
&\qquad=-\frac{2^{k-1}i^{k} \pi^{2k-11}\zeta(1-k)\zeta(11-k)}{(k-1)!(k-12)!}\int_{0}^{\infty}\int_{u}^{\infty}(z-u)^{k-12} u^5 R_k(u)S_{k-10}(z)\d z~ \d u,
\end{split}
\end{equation}
where
\begin{align}
R_{k}(u):=&E_{k}(2i u)-2^{k}E_{k}(4 i u),\label{R def}\\
S_{k}(z):=&E_{k}(i z)-(1+2^{k-1})E_{k}(2i z)+2^{k-1}E_{k}(4i z).\label{S def}
\end{align}
\end{theorem}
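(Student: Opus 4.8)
The plan is to run the argument behind Theorem~\ref{Theorem value at 12} for an arbitrary $k\ge 12$, the only genuinely new feature being the surplus power $u^{k-7}=u^{5}\cdot u^{k-12}$ appearing in \eqref{almost done 1}. The device that absorbs it is the elementary identity
\[
\int_{u}^{\infty}(z-u)^{k-12}\,e^{-\lambda z}\,\d z=\frac{(k-12)!}{\lambda^{k-11}}\,e^{-\lambda u}\qquad(\lambda>0),
\]
which is just Cauchy's formula for $(k-11)$-fold integration; read from right to left it trades a factor $\lambda^{-(k-11)}$ (which the $u$-integral will produce) for an extra integration against the kernel $(z-u)^{k-12}$, and this is precisely the kernel occurring in \eqref{odd case} and \eqref{even case}. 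So the strategy is: massage the $u$-integral in \eqref{almost done 1} into a shape in which such a power $\lambda^{-(k-11)}$ is visible, apply the identity above to introduce the second variable $z$, and then resum the resulting series in $z$ and in $u$ separately into the combinations $P,Q$ (when $k$ is odd) or $R,S$ (when $k$ is even).

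Concretely I would (i) apply the \emph{key trick} of Section~\ref{Sec:12 case} — a change of variables $u\mapsto cu$ in the $u$-integral of \eqref{almost done 1}, with $c$ a monomial in $n,m,r,s$ chosen so as to permute the summation indices — arranged so that, once the identity above has been used to peel off the kernel, the remaining ``$u$-factor'' and the new ``$z$-factor'' are each Lambert-type double series over the constrained lattice $\{m,r,s\ \mathrm{odd}\}$; then (ii) resum those two double series using only the geometric series $\sum_{d\ \mathrm{odd}}x^{d}=x/(1-x^{2})$, inclusion--exclusion over the parities of the four indices, and the Lambert shape $\sum_{n,d\ge1}n^{j-1}e^{2\pi i n d z}=\tfrac12\zeta(1-j)\bigl(E_{j}(z)-1\bigr)$ of the Eisenstein series. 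The inclusion--exclusion over parities is exactly what manufactures the signed translate combinations $E_{j}(iu)-(2+2^{j})E_{j}(2iu)+2^{j+1}E_{j}(4iu)$ and the like, with precisely the coefficients recorded in \eqref{P def}--\eqref{S def}.

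Those same coefficients make the constant terms telescope, so that $P_{k-5}$ (respectively $u^{5}R_{k}$) vanishes as $u\to 0^{+}$ and $Q_{k-5}$ (respectively $S_{k-10}$) vanishes as its argument tends to $i\infty$; this is enough to justify interchanging the summation with the double integration, by the same uniform/absolute-convergence argument used after \eqref{almost done 1}. The split into the odd- and even-$k$ statements is forced by the parity of the weights that emerge: for odd $k$ both Eisenstein factors have the even weight $k-5$, whence $\bigl[\zeta(6-k)\bigr]^{2}$, while for even $k$ they have the even weights $k$ and $k-10$, whence $\zeta(1-k)\zeta(11-k)$, and one power $u^{5}$ is left unabsorbed. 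The explicit constants then assemble themselves: $(2\pi)^{k}/(k-1)!$ from \eqref{critical q-integral}--\eqref{almost done 1}, the $1/(k-12)!$ and a power of $2\pi$ from the identity above, the $\zeta$-values from the normalisation $E_{j}=1+\tfrac{2}{\zeta(1-j)}\sum\cdots$, and the residual powers of $2$, $\pi$ and $i$ from the translations $u\mapsto 2u,4u$ implicit in the parity bookkeeping and from working along $z=iu$.

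The step I expect to be the main obstacle is (i): choosing the change of variables so that, after the kernel has been split off, \emph{both} surviving factors resum to honest Eisenstein combinations and not to derivatives of Eisenstein series or to Eichler integrals. A careless substitution leaves a summation index carrying a nonzero power inside a divisor sum, and $\sum_{r,s}r^{a}s^{b}q^{rs}$ is a genuine (translate of an) Eisenstein series only when $a$ or $b$ vanishes; engineering this simultaneously for the $u$-factor and the $z$-factor, together with the cusp-vanishing that the double integral requires, is the real content of the theorem. It is also why the dichotomy between \eqref{odd case} and \eqref{even case} arises, and why a reduction all the way down to a one-dimensional integral is available only in the borderline case $k=12$ treated in Section~\ref{Sec:12 case}.
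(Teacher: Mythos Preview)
Your proposal is correct and follows essentially the same line as the paper's proof: the scaling $u\mapsto cu$ in \eqref{almost done 1} (the paper takes $u\mapsto mu/r$ for $k$ odd and $u\mapsto nu/r$ for $k$ even), the shifted Gamma integral to introduce the kernel $(z-u)^{k-12}$, and the Lambert resummation of each factor into the Eisenstein combinations $P,Q$ or $R,S$. The one ingredient you leave implicit is an additional inversion $u\mapsto 1/(4u)$ (equivalently, an appeal to the modular involution $E_{j}(-1/z)=z^{j}E_{j}(z)$): after the scaling, the factor that must receive the shifted Gamma integral still sits in the shape $e^{-c/u}$, so the inversion is needed before your Cauchy identity applies, and it is this involution---not merely ``working along $z=iu$''---that produces the powers $i^{k-1}$ and $i^{k}$ in \eqref{odd case}--\eqref{even case}.
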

\begin{proof}
We prove \eqref{odd case} first.  Assume that $k$ is odd in \eqref{almost done 1}, and let $u\mapsto m u/r$.  The integral becomes
\begin{equation*}
\begin{split}
8&\left(1+24\cdot 2^{-k}+2^{11-2k}\right)L(\Delta,k)\\
&\qquad=\frac{(2\pi)^{k}}{(k-1)!}\int_{0}^{\infty}u^{k-7}\sum_{\substack{m\ge 1\\\text{$m$ odd}}}\frac{m^{k-6} e^{-2\pi m u}}{1-e^{-4\pi m u}}\sum_{\substack{n,r\ge 1\\\text{$r$ odd}}}\frac{n^5}{r^{k-11}}e^{-\frac{2\pi n r}{4u}}\d u.
\end{split}
\end{equation*}
Let $u\mapsto\frac{1}{4u}$, then
\begin{equation}\label{gen odd case intermediate}
\begin{split}
8&\left(1+24\cdot 2^{-k}+2^{11-2k}\right)L(\Delta,k)\\
&\qquad=\frac{(2\pi)^{k}}{(k-1)!}\int_{0}^{\infty}4(4u)^{5-k}\sum_{\substack{m\ge 1\\\text{$m$ odd}}}\frac{m^{k-6} e^{-\frac{2\pi m}{4u}}}{1-e^{-\frac{4\pi m}{4u}}}\sum_{\substack{n,r\ge 1\\\text{$r$ odd}}}\frac{n^5}{r^{k-11}}e^{-2\pi n r u}\d u.
\end{split}
\end{equation}
Using the involution for $E_{k}(u)$, we easily find that
\begin{equation}\label{gen odd case nested sum 1}
\begin{split}
4(4u)^{5-k}\sum_{\substack{m\ge 1\\\text{$m$ odd}}}\frac{m^{k-6} e^{-\frac{2\pi m}{4u}}}{1-e^{-\frac{4\pi m}{4u}}}=i^{k-1}2^{5-k}\zeta(6-k)P_{k-5}(u),
\end{split}
\end{equation}
where $P_{k}(u)$ is defined in \eqref{P def}.
To simplify the second sum in \eqref{gen odd case intermediate}, we require the integral:
\begin{equation}\label{shifted gamma integral}
\frac{e^{-2\pi a u}}{a^s}=\frac{(2\pi)^s}{\Gamma(s)}\int_{u}^{\infty}(z-u)^{s-1}e^{-2\pi a z}\d z.
\end{equation}
If $s\mapsto k-11$ and $a\mapsto n r$, then
\begin{align}
\sum_{\substack{n,r\ge 1\\\text{$r$ odd}}}\frac{n^5}{r^{k-11}}e^{-2\pi n r u}=&\sum_{\substack{n,r\ge 1\\\text{$r$ odd}}}\frac{n^{k-6}}{(n r)^{k-11}}e^{-2\pi n r u}\notag\\
=&\frac{(2\pi)^{k-11}}{(k-12)!}\int_{u}^{\infty}(z-u)^{k-12}\sum_{\substack{n,r\ge 1\\\text{$r$ odd}}}n^{k-6}e^{-2\pi n r z}\d z\notag\\
=&\frac{(2\pi)^{k-11}\zeta(6-k)}{2(k-12)!}\int_{u}^{\infty}(z-u)^{k-12}Q_{k-5}(z)\d z,\label{gen odd case nested sum 2}
\end{align}
where $Q_{k}(z)$ is defined in \eqref{Q def}.  Finally combine \eqref{gen odd case nested sum 2}, \eqref{gen odd case nested sum 1}, and \eqref{gen odd case intermediate} to complete the proof of \eqref{odd case}.

Next we prove \eqref{even case}.  The steps are similar to the proof of \eqref{odd case}, so we will be brief.  Assume that $k$ is even in equation \eqref{almost done 1}, and let $u\mapsto n u/r$.  Then we find
\begin{align*}
8&\left(1+24\cdot 2^{-k}+2^{11-2k}\right)L(\Delta,k)\\
&\qquad=\frac{(2\pi)^{k}}{(k-1)!}\int_{0}^{\infty}u^{k-7}\sum_{\substack{n,s\ge 1\\\text{$s$ odd}}}n^{k-1} e^{-2\pi n s u}\sum_{\substack{m,r\ge 1\\\text{$m$,$r$ odd}}}\frac{1}{r^{k-11}}e^{-\frac{2\pi m r}{4u}}\d u\\
&\qquad=\frac{(2\pi)^{k}}{(k-1)!}\int_{0}^{\infty}4(4u)^{5-k}\sum_{\substack{n,s\ge 1\\\text{$s$ odd}}}n^{k-1} e^{-\frac{2\pi n s}{4 u}}\sum_{\substack{m,r\ge 1\\\text{$m$,$r$ odd}}}\frac{1}{r^{k-11}}e^{-2\pi m r u}\d u,\\
&\qquad=\frac{(2\pi)^{2k-11}}{(k-1)!(k-12)!}\int_{0}^{\infty}\int_{u}^{\infty}(z-u)^{k-12}\left( 4(4u)^{5-k}\sum_{\substack{n,s\ge 1\\\text{$s$ odd}}}n^{k-1} e^{-\frac{2\pi n s}{4 u}}\right)\\
&\qquad\qquad\qquad\qquad\qquad\qquad\qquad\times\left(\sum_{\substack{m,r\ge 1\\\text{$m$,$r$ odd}}}m^{k-11}e^{-2\pi m r z}\right)\d z ~\d u.
\end{align*}
The second equality follows from mapping $u\mapsto\frac{1}{4u}$, and the third equality follows from \eqref{shifted gamma integral}.  Finally, it is easy to show that
\begin{align*}
4(4u)^{5-k}\sum_{\substack{n,s\ge 1\\\text{$s$ odd}}}n^{k-1} e^{-\frac{2\pi n s}{4 u}}=&-i^{k} 2^{11-k}\zeta(1-k) u^5 R_{k}(u),\\
\sum_{\substack{m,r\ge 1\\\text{$m$,$r$ odd}}}m^{k-11}e^{-2\pi m r z}=&\frac{1}{2}\zeta(11-k)S_{k-10}(z),
\end{align*}
where $R_{k}(u)$ and $S_{k}(z)$ are defined in \eqref{R def} and \eqref{S def}.
\end{proof}

\begin{corollary}\label{Values at k bigger than 12} Let $F(\alpha)$ denote the usual hypergeometric function, defined in \eqref{2F1 def}.  The following identities are true:
\begin{align}
L(\Delta,13)
=&\frac{\pi^{13}}{q_{13}}\int_{0}^{1}\int_{0}^{\alpha}\left[F(\alpha)F(1-\beta)
-F(\beta)F(1-\alpha)\right]\left[F(\alpha)F(\beta)\right]^5\notag\\
&\qquad\times\frac{(1+\alpha)(17-32\alpha+17\alpha^2)(2+13\beta+2\beta^2)}{\alpha(1-\beta)}\d \beta~ \d \alpha,\label{L value at 13}\\
L(\Delta,14)=&\frac{\pi^{15} }{q_{14}}\int_0^1\int_0^{\alpha} \left[F(\alpha) F(1-\beta)- F(\beta)F(1-\alpha)\right]^2 \left[F(\alpha)F(1-\alpha)\right]^5\notag\\
&\qquad\times\frac{(2-\alpha) \left(5461-10922 \alpha+5973 \alpha^2-512 \alpha^3+\alpha^4\right) (2-\beta)}{\alpha (1-\beta)}\d\beta~ \d\alpha,\label{L value at 14}\\
L(\Delta,15)=&\frac{\pi ^{17} }{q_{15}}\int _0^1\int _0^{\alpha}  [F(\alpha) F(1-\beta)-F(\beta) F(1-\alpha) ]^3 [F(\alpha) F(\beta)]^5\notag\\
&\qquad\times\frac{\left(31-47 \alpha+33 \alpha^2-47 \alpha^3+31 \alpha^4\right) (1+\beta) \left(1+29\beta+\beta^2\right) }{\alpha (1-\beta)}\d\beta~ \d\alpha\label{L value at 15},
\end{align}
where $q_{13}=122987403000$, $q_{14}=798232309875$, and $q_{15}=67002093132975/4$.
\end{corollary}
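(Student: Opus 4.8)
The plan is to deduce Corollary \ref{Values at k bigger than 12} from Theorem \ref{main theorem} by running, now in two integration variables simultaneously, the same hypergeometric substitution that produced Theorem \ref{Theorem value at 12}. First I would specialize: use the odd formula \eqref{odd case} for $k=13$ and $k=15$, and the even formula \eqref{even case} for $k=14$. Substituting the relevant values of the Riemann zeta function ($\zeta(-7)=\tfrac1{240}$ and $\zeta(-9)=-\tfrac1{132}$ in the odd cases, $\zeta(-13)\zeta(-3)=-\tfrac1{1440}$ when $k=14$) together with the elementary powers of $2$ and $i$ expresses $L(\Delta,k)$ as an explicit constant times an integral over the wedge $\{0<u<z<\infty\}$ of $(z-u)^{k-12}$ multiplied by $P_{k-5}(u)Q_{k-5}(z)$ when $k$ is odd, or by $u^5R_k(u)S_{k-10}(z)$ when $k$ is even.

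Next I would set $u=F(1-\alpha)/(2F(\alpha))$ and, independently, $z=F(1-\beta)/(2F(\beta))$. By \eqref{duda} the map $\alpha\mapsto u$ is a strictly decreasing bijection of $(0,1)$ onto $(0,\infty)$, so the constraint $u<z$ becomes $\beta<\alpha$ and the region of integration turns into exactly $\{0<\beta<\alpha<1\}$, as in the statement; the Jacobian $\partial(u,z)/\partial(\alpha,\beta)$ is read off directly from \eqref{duda}, and the identity $(z-u)^{k-12}=2^{-(k-12)}[F(\alpha)F(1-\beta)-F(\beta)F(1-\alpha)]^{k-12}/[F(\alpha)F(\beta)]^{k-12}$ produces the bracketed factors appearing in \eqref{L value at 13}--\eqref{L value at 15}.

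The substantive step is to evaluate $P_{k-5}(u)$, $Q_{k-5}(z)$, $R_k(u)$ and $S_{k-10}(z)$ along the curve $u=F(1-\alpha)/(2F(\alpha))$. Just as in the proof of Theorem \ref{Theorem value at 12}, each of the Eisenstein-type sums entering \eqref{odd case} and \eqref{even case} is of a kind that Ramanujan evaluated in closed form; concretely one writes $E_m$ at the three arguments $iu$, $2iu$, $4iu$ in terms of $\alpha$ and $F(\alpha)$ using the classical formulas in his notebooks (\cite[pg.~124, Entry~12]{Be3}, \cite[pg.~126, Entry~13]{Be3}), the degree-$2$ and degree-$4$ modular equations --- equivalently the Landen relation $F(\alpha')=\tfrac12(1+\sqrt{1-\alpha})\,F(\alpha)$ linking the moduli at $iu$ and $2iu$, applied once more for $4iu$ --- and the weight-$m$ syzygies $E_8=E_4^2$, $E_{10}=E_4E_6$, $E_{14}=E_4^2E_6$ that identify the relevant spaces with one-dimensional ones. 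The resulting combinations collapse, after cancellation, to an algebraic function of $\alpha$ (resp.\ $\beta$) times a power of $F(\alpha)$ (resp.\ $F(\beta)$), and it is this cancellation that produces the palindromic numerators quoted in the corollary. In the even case $k=14$ one additionally carries the factor $u^5=F(1-\alpha)^5/(2^5F(\alpha)^5)$ --- equivalently, one evaluates the relevant sum at the Fricke-conjugate point $i/(4u)$, which has parameter $1-\alpha$ --- and this is what upgrades the $[F(\alpha)F(\beta)]^5$ of the odd cases to the $[F(\alpha)F(1-\alpha)]^5$ appearing in \eqref{L value at 14}.

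Finally, collecting all the surviving powers of $F$ and assembling the numerical prefactors --- the $\zeta$-values, the power of $2$ hidden inside the Euler factor $8(1+24\cdot2^{-k}+2^{11-2k})$, the constant $1/(4\pi^2)$ coming from the two Jacobians, and the factorials $(k-1)!\,(k-12)!$ --- yields the rational constants $q_{13}=122987403000$, $q_{14}=798232309875$, $q_{15}=67002093132975/4$. The principal obstacle is the evaluation step of the third paragraph: the modular-equation bookkeeping for three nested arguments is lengthy, and one must check both that the rational functions of $\alpha$ arising along the way genuinely have no poles in $0<\alpha<1$ --- so that they really are the polynomials claimed --- and that the signs and Landen factors combine correctly all the way along the chain $iu\mapsto2iu\mapsto4iu$.
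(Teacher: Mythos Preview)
Your proposal is correct and follows essentially the same route as the paper: specialize Theorem~\ref{main theorem} to $k\in\{13,14,15\}$, perform the simultaneous substitutions $u=F(1-\alpha)/(2F(\alpha))$ and $z=F(1-\beta)/(2F(\beta))$ with Jacobian given by \eqref{duda}, and then evaluate $P_{k-5}$, $Q_{k-5}$, $R_k$, $S_{k-10}$ as polynomials in the modulus times a power of $F$ via Ramanujan's Eisenstein-series parametrizations. The paper's own argument is a terse sketch (it only writes out the $k=13$ and $k=15$ cases and quotes the resulting polynomial identities for $P_8,Q_8,P_{10},Q_{10}$ directly), whereas you supply more of the mechanism---the Landen chain $iu\mapsto2iu\mapsto4iu$, the syzygies $E_8=E_4^2$, $E_{10}=E_4E_6$, $E_{14}=E_4^2E_6$, and the explanation of how $u^5$ produces the $[F(\alpha)F(1-\alpha)]^5$ factor when $k=14$---so your write-up is if anything more complete than the original.
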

\begin{proof}  We sketch the proof of equations \eqref{L value at 13} and \eqref{L value at 15} below.  Set
\begin{align*}
u=\frac{F(1-\alpha)}{2F(\alpha)},&&z=\frac{F(1-\beta)}{2F(\beta)},
\end{align*}
and then apply \eqref{duda}. Equation \eqref{odd case} becomes
\begin{equation*}
\begin{split}
8&\left(1+24\cdot 2^{-k}+2^{11-2k}\right)L(\Delta,k)\\
&\qquad=\frac{2^{k-7} i^{k-1}\pi^{2k-11}\left[\zeta(6-k)\right]^2}{(k-1)!(k-12)!}\\
&\qquad\qquad\times\int_{0}^{1}\int_{0}^{\alpha}\left(\frac{F(1-\beta)}{2F(\beta)}-\frac{F(1-\alpha)}{2F(\alpha)}\right)^{k-12}\frac{P_{k-5}(u)Q_{k-5}(z)~\d \beta~\d \alpha}{4\pi^2\alpha(1-\alpha)\beta(1-\beta)F(\alpha)^2F(\beta)^2}.
\end{split}
\end{equation*}
Finally we use the formulas
\begin{align*}
P_{8}(u)=&15\left(1-\alpha^2\right)\left(17-32\alpha+17\alpha^2\right)\left[F(\alpha)\right]^8,\\
Q_{8}(z)=&15\beta\left(2+13\beta+2\beta^2\right)\left[F(\beta)\right]^8,\\
P_{10}(u)=&33 (1-\alpha) \left(31-47 \alpha+33 \alpha^2-47 \alpha^3+31 \alpha^4\right)\left[ F(\alpha)\right]^{10},\\
Q_{10}(z)=&-\frac{33}{2}\beta(1+\beta) \left(1+29 \beta+\beta^2\right)\left[ F(\beta)\right]^{10},
\end{align*}
when $k=13$ and $k=15$.  These formulas are easy to prove by expressing $E_{k}(i u)$ in terms of polynomials in $E_{4}(i u)$ and $E_{6}(i u)$, and then appealing to \cite[pg.~126, Entry~13]{Be3}.  In practice, we used numerical searches to find linear dependencies between $P_{k}(u)$, $Q_{k}(u)$, and $\left\{\alpha^{j} \left[F(\alpha)\right]^k\right\}_{j=0}^{j=k}$.
\end{proof}

If $u$ and $\alpha$ are related by \eqref{variable change}, then it is a classical fact that $E_{k}(2^{j} i u)=(\text{polynomial in $\alpha$})\times [F(\alpha)]^k$ for $j\in\{0,1,2\}$ and $k$ even.  This makes it possible to see that the pattern of Corollary \ref{Values at k bigger than 12} continues.  The identities for $L(\Delta,k)$ are always two dimensional integrals containing $$\left[F(\alpha)F(1-\beta)-F(\beta)F(1-\alpha)\right]^{k-12}.$$
Furthermore, if $k$ is even the integral contains $\left[F(\alpha)F(1-\alpha)\right]^5$, and if $k$ is odd the integral contains $\left[F(\alpha)F(\beta)\right]^5$.

\section{Speculation and Conclusion}\label{conclusion}

We have proved that $L(\Delta,k)$ is a period for $k\ge 12$. It is interesting to speculate on what simplifications might be possible for the integrals in Theorem \ref{Theorem value at 12} and Corollary \ref{Values at k bigger than 12}.  We can draw an analogy with the case of weight two modular forms.  If we select a specific modular form such as $f(z)=\eta(z)\eta(3z)\eta(5z)\eta(15z)$, then it is possible to prove results like
\begin{equation}\label{deninger}
\frac{15}{4\pi^2}L(f,2)=\int_{0}^{1}\int_{0}^{1}\log\left|1+X+X^{-1}+Y+Y^{-1}\right|\d t~\d s,
\end{equation}
where $X=e^{2\pi i t}$ and $Y=e^{2\pi i s}$.  The integral on the right is a Mahler measure, and the surface obtained from setting $1+X+X^{-1}+Y+Y^{-1}=0$, is precisely the elliptic curve attached to $f(z)$ by the modularity theorem.  Rodriguez-Villegas gave a very nice explanation of \textit{why} results like \eqref{deninger} exist \cite{RV}, \cite{De}, \cite{Bo1}, \cite{Rg2}.  The key point for us, is that the proof of \eqref{deninger} \textit{does not require any prior knowledge of the elliptic curve attached to $f(z)$}.  Thus it seems plausible that there might be an analogous relation between $L(\Delta,12)/\pi^{12}$ and the Mahler measure of a polynomial in $12$ variables.  If such an identity exists, then it should be possible to derive it from equation \eqref{intro formula} with only calculus.  Furthermore, the $12$ variable polynomial should give an affine model of a hypersurface attached to $\Delta(z)$.  Deligne found such a hypersurface in his proof of the Ramanujan-Petersson conjectures, but his hypersurface is typically described using \'{e}tale cohomology.

\begin{acknowledgements} The author thanks Wadim Zudilin for his encouragement and useful suggestions.
\end{acknowledgements}

\end{document}